\documentclass[10pt]{amsart}

\addtolength{\voffset}{-1cm}
\addtolength{\hoffset}{-1.8cm}
\addtolength{\textwidth}{3.5cm}

\usepackage{amsmath,amsfonts, latexsym,graphicx, amssymb, mathtools, enumerate, mdwlist, amscd}

\usepackage{hyperref}
\hypersetup{colorlinks=true, urlcolor=blue, citecolor=blue, linkcolor=blue}

\newcommand{\U}{{\mathcal U}}
\newcommand{\0}{{\mathbf 0}}
\newcommand{\C}{{\mathbb C}}
\newcommand{\Z}{{\mathbb Z}}

\newcommand{\supp}{\operatorname{supp}}

\newcommand{\im}{\mathop{\rm im}\nolimits}

\newcommand{\Adot}{\mathbf A^\bullet}
\newcommand{\Bdot}{\mathbf B^\bullet}

\newcommand{\Fdot}{\mathbf F^\bullet}   

\newcommand{\Pdot}{\mathbf P^\bullet}

\newcommand{\vdual}{{\mathcal D}}

\newcommand{\coker}{{\operatorname{coker}\,}}

\newcommand{\pcoh}{{{\phantom |}^\mu\hskip -0.025in H}}
\newcommand{\var}{\mathbf{var}}
\newcommand{\can}{\mathbf{can}}

\newtheorem{defn0}{Definition}[section]
\newtheorem{prop0}[defn0]{Proposition}
\newtheorem{conj0}[defn0]{Conjecture}
\newtheorem{thm0}[defn0]{Theorem}
\newtheorem{lem0}[defn0]{Lemma}
\newtheorem{corollary0}[defn0]{Corollary}
\newtheorem{example0}[defn0]{Example}
\newtheorem{remark0}[defn0]{Remark}
\newtheorem{question0}[defn0]{Question}
\newtheorem{exercise0}[defn0]{Exercise}

\newenvironment{prop}{\begin{prop0}}{\end{prop0}}

\newenvironment{thm}{\begin{thm0}}{\end{thm0}}
\newenvironment{lem}{\begin{lem0}}{\end{lem0}}

\newenvironment{exm}{\begin{example0}\rm}{\end{example0}}
\newenvironment{rem}{\begin{remark0}\rm}{\end{remark0}}

\newcommand{\propref}[1]{Proposition~\ref{#1}}
\newcommand{\thmref}[1]{Theorem~\ref{#1}}
\newcommand{\lemref}[1]{Lemma~\ref{#1}}

\newcommand{\exref}[1]{Example~\ref{#1}}

\newcommand{\remref}[1]{Remark~\ref{#1}}

\title{A lower bound on the top betti number of the Milnor fiber}

\subjclass[2010]{32S25, 32S15, 32S55}


\author{David B. Massey}

\date{}

\begin{document}

\begin{abstract} We derive a lower bound for the top possibly-non-zero betti number of the Milnor fiber of an analytic function in terms of the zero-dimensional L\^e number and the internal monodromy of the vanishing cycles restricted to the complex link of the critical locus of the function.
\end{abstract}

\maketitle




\section{Introduction}

Let $\U$ be an open subset of $\C^{n+1}$. In order to have a convenient point to discuss, we assume that $\0\in\U$. Let $f:(\U, \0)\rightarrow (\C, 0)$ be a nowhere-locally-constant complex analytic function. We let $\Sigma f$ denote the critical locus of $f$ and note that, near $\0$, this is contained in $V(f)$.

For $x\in V(f)$, we denote the Milnor fiber of $f$ at $x$ by $F_{f, x}$. We will use a field $\mathfrak K$ as our base ring for cohomology (and for perverse sheaves). The ``top'' betti number $\tilde b_n$ in the title refers to the dimension of $\widetilde H^n(F_{f,\0}; \mathfrak K)$, since $n$ is the highest degree for which the cohomology may be non-zero (note that this is a  non-traditional use of the term ``betti number'' unless $\mathfrak K$ has characteristic zero). 

\bigskip

We will use the derived category and perverse sheaves; references are  \cite{boreletal}, \cite{kashsch}, \cite{dimcasheaves}, and \cite{schurbook}. We need to use field coefficients since our proof uses a perverse sheaf version of the fact the dimension of the cokernel of a linear transformation between finite-dimensional vector spaces is equal to the dimension of the kernel of the dual transformation.

\bigskip

Let $\Pdot$ denote the shifted vanishing cycles $\phi_f[-1]{\mathfrak K}_\U^\bullet[n+1]$ along $f$. Thus $\tilde b_n=\dim H^0(\Pdot)_\0$. Let $L$ be the restriction to $V(f)$ of a generic linear form on $\C^{n+1}$; we need for $L$ to be generic enough so that $\0$ is an isolated point in the support of $\phi_L[-1]\Pdot$. Then the stalk cohomology at the origin is zero outside of degree zero;  we define $\lambda^0:=\lambda^0_f(\0)=\dim H^0(\phi_L[-1]\Pdot)_\0$. 

If $L$ is sufficiently generic, $\lambda^0$ is the $0$-th L\^e number, as defined algebraically in \cite{lecycles} as the intersection number
$$
\lambda^0=\left(\Gamma^1_{f, L}\cdot V\left(\frac{\partial f}{\partial L}\right)\right)_\0,
$$
where $\Gamma^1_{f, L}$ is the relative polar curve of Hamm, L\^e, and Teissier from \cite{hammlezariski} and \cite{teissiercargese}. Note that, while $\tilde b_n$ can change with the base field, $\lambda^0$ is independent of the field $\mathfrak K$.

\bigskip

In \thmref{thm:main}, we give a lower bound -  in terms of $\lambda^0$ and the internal monodromy - for $\tilde b_{n}$, for an $f$ where the critical locus has arbitrary dimension. $T^L_{\Pdot}$ is the $L$-monodromy automorphism on $\psi_L[-1]\phi_f[-1]\C_\U^\bullet[n+1]$ (letting the value of $L$, not $f$, go around a small circle). We denote the image of $\operatorname{id}-T^L_{\Pdot}$ (in the abelian category of perverse sheaves) by $\im\{\operatorname{id}-T^L_{\Pdot}\}$; note that, as we shall discuss later, $\0$ is an isolated point in the support of this image.

\medskip

The first version of our main theorem is:

\smallskip

\begin{thm}\label{thm:mainintro} Regardless of the dimension of $\Sigma f$,
$$
\frac{\lambda^0-\dim\big(H^0(\im\{\operatorname{id}-T^L_{\Pdot}\})_\0\big)}{2}\ \leq \ \tilde b_n\  \leq \ \lambda^0-\dim\big(H^0(\im\{\operatorname{id}-T^L_{\Pdot}\})_\0\big).
$$

In particular, if $T^L_{\Pdot}=\operatorname{id}$, then $
\lambda^0/2\leq \tilde b_n\leq \lambda^0$.
\end{thm}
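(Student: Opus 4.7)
The plan is to identify $\tilde b_n$ with $\lambda^0-\rank c$ for the stalk map $c$ at $\0$ of $\can$, use Verdier self-duality of $\Pdot$ to force the stalk map $v$ at $\0$ of $\var$ to have the same rank, and then combine the perverse-image bound $\mu\leq\rank c$ with elementary linear algebra to squeeze $\rank c$ between $\mu$ and $(\lambda^0+\mu)/2$.

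First, from the distinguished triangle $i^*\Pdot[-1]\to\psi_L[-1]\Pdot\xrightarrow{\can}\phi_L[-1]\Pdot\xrightarrow{+1}$ on $V(L)$, the stalk long exact sequence at $\0$ — using the perverse support conditions plus the assumption that $\0$ is isolated in $\supp\phi_L[-1]\Pdot$ — collapses to
\[
0\to H^{-1}(\Pdot)_\0\to A\xrightarrow{c}B\to H^0(\Pdot)_\0\to 0,
\]
with $A:=H^0(\psi_L[-1]\Pdot)_\0$ and $B:=H^0(\phi_L[-1]\Pdot)_\0$ of dimension $\lambda^0$. Hence $\tilde b_n=\lambda^0-\rank c$ and $\dim\ker c=\dim H^{-1}(\Pdot)_\0$. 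The Verdier-dual triangle $\phi_L[-1]\Pdot\xrightarrow{\var}\psi_L[-1]\Pdot\to i^!\Pdot[1]\xrightarrow{+1}$ analogously yields $\coker v\cong H^1(i_\0^!\Pdot)$, where $v:=H^0(\var)_\0$. The self-duality $\vdual\Pdot\cong\Pdot$ — which holds because $\mathfrak K_\U^\bullet[n{+}1]$ is Verdier self-dual on the smooth manifold $\U$ and $\phi_f[-1]$ commutes with $\vdual$ — identifies $\dim H^k(i_\0^!\Pdot)=\dim H^{-k}(\Pdot)_\0$, so in particular $\dim\coker v=\dim\ker c$. Applying rank-nullity to $c$ and $v$, $\dim A=\rank c+\dim\ker c=\rank v+\dim\coker v$, which forces $\rank c=\rank v$.

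For the upper bound, $\operatorname{id}-T^L_{\Pdot}=\var\circ\can$ on $\psi_L[-1]\Pdot$, so $\im\{\operatorname{id}-T^L_{\Pdot}\}=\var(\im\can)$ in the perverse abelian category. Since $\phi_L[-1]\Pdot$ is a skyscraper at $\0$ in a neighborhood of $\0$, its perverse subobject $\im\can$ is a skyscraper of dimension $\rank c$, and $\var(\im\can)$ is a perverse quotient of this skyscraper; hence $\mu\leq\rank c$, and therefore $\tilde b_n=\lambda^0-\rank c\leq\lambda^0-\mu$.

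For the lower bound, factor $N_\psi:=\operatorname{id}-T^L_{\Pdot}$ in the perverse category as $\psi_L[-1]\Pdot\twoheadrightarrow\im N_\psi\hookrightarrow\psi_L[-1]\Pdot$; passing to stalks at $\0$ shows that $vc$ factors through $H^0(\im N_\psi)_\0$, giving $\rank(vc)\leq\mu$. Rank-nullity applied to $v|_{\im c}$ yields $\rank c-\rank(vc)=\dim(\im c\cap\ker v)\leq\dim\ker v=\lambda^0-\rank v$; combining with $\rank c=\rank v$ gives $2\rank c\leq\lambda^0+\rank(vc)\leq\lambda^0+\mu$, so $\tilde b_n\geq(\lambda^0-\mu)/2$. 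The main obstacle I anticipate is the Verdier-duality identification $\dim H^k(i_\0^!\Pdot)=\dim H^{-k}(\Pdot)_\0$, which relies on the self-duality of $\Pdot$ and the compatibilities $\vdual\phi_f[-1]=\phi_f[-1]\vdual$ and $\vdual i^*=i^!\vdual$; once this is in hand, producing $\rank c=\rank v$ and the two bounds is routine diagram chasing and rank arithmetic.
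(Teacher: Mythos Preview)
Your plan is essentially the paper's own argument: identify $\tilde b_n$ as the dimension of the cokernel of the degree-$0$ stalk map of $\can^L_{\Pdot}$, use self-duality of $\Pdot$ to match this with the corresponding quantity for $\var^L_{\Pdot}$, and then run a linear-algebra squeeze. Your inequalities $\mu\le\rank c$ (from $\im\{\operatorname{id}-T^L_{\Pdot}\}$ being a quotient of the point-supported $\im\can$) and $2\rank c\le\lambda^0+\rank(vc)\le\lambda^0+\mu$ are exactly a rephrasing of the paper's \lemref{lem:inequality}, so that part is fine.

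The one genuine soft spot is the self-duality step giving $\rank c=\rank v$. From the stalk long exact sequence of the $\var$-triangle you correctly obtain $\coker v\cong H^1\big((m^!\Pdot)_{\0}\big)$, where $m:V(L)\hookrightarrow X$; this is the \emph{stalk} at $\0$ of $m^!\Pdot$. Your notation $i_\0^!\Pdot$ and the identity ``$\dim H^k(i_\0^!\Pdot)=\dim H^{-k}(\Pdot)_\0$'' indicate you then want the \emph{costalk} of $\Pdot$ at $\0$ (for which that identity does follow from $\vdual\Pdot\cong\Pdot$). But $(m^!\Pdot)_{\0}=j_\0^*m^!\Pdot$ is not $j_\0^!m^!\Pdot$; equivalently, the stalk of a Verdier dual is not the dual of the stalk, so the compatibilities $\vdual m^*=m^!\vdual$ you list do not by themselves bridge $H^1\big((m^!\Pdot)_{\0}\big)$ to $H^{-1}(\Pdot)_\0$. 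As written, the link $\dim\coker v=\dim\ker c$ is not established.

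The paper sidesteps this by performing the duality one categorical level up: it shows $\vdual\big(\coker\{\can^L_{\Pdot}\}\big)\cong\ker\{\var^L_{\Pdot}\}$ as \emph{perverse sheaves}, via $\vdual\,\pcoh^1(m_*m^*[-1]\Pdot)\cong\pcoh^{-1}(m_!m^![1]\Pdot)$. Both of these perverse sheaves have $\0$ isolated in their support, so their degree-$0$ stalk dimensions agree, and \lemref{lem:stalks} identifies those stalk dimensions with $\dim\coker c$ and $\dim\ker v$. This yields $\rank c=\rank v$ cleanly, after which your rank arithmetic goes through verbatim. So your outline is correct; you just need to replace the pointwise duality computation with the perverse-sheaf-level one (or otherwise justify passing between $j_\0^*m^!$ and $j_\0^!m^!$, which is where the real content lies).
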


\smallskip

 In Section 2, we prove several lemmas that we need. In Section 3, we prove the main theorem,  \thmref{thm:main}, and give an example of the well-studied case where $\dim_\0\Sigma f=1$. In Section 4, we analyze the case where $\dim_\0\Sigma f>1$ more carefully and present \thmref{thm:mainv2}, the second version of our main theorem, in which we make the calculation of  $H^0(\im\{\operatorname{id}-T^L_{\Pdot}\})_\0$ more manageable.

\medskip

\section{Lemmas}

We continue with our base ring being the field $\mathfrak K$. Let $X$ be a complex analytic space. The category $\operatorname{Perv}(X)$ of perverse sheaves on $X$ is abelian.

\begin{lem}\label{lem:stalks} Let $x\in X$ and suppose that $x$ is an isolated point in the support of $\Pdot\in \operatorname{Perv}(X)$, which implies that the stalk cohomology $H^k(\Pdot)_x$ is zero if $k\neq 0$.

 Suppose we have maps of perverse sheaves $\Adot\xrightarrow{\alpha}\Pdot$ and $\Pdot\xrightarrow{\beta}\Bdot$. Let $\alpha_x^k$ and $\beta_x^k$ denote the corresponding maps on stalk cohomology:
 $$\alpha^k_x: H^k(\Adot)_x\rightarrow  H^k(\Pdot)_x\hskip 0.2in\textnormal{ and }\hskip 0.2in \beta^k_x: H^k(\Pdot)_x\rightarrow  H^k(\Bdot)_x.
 $$
 Then,
 \begin{enumerate}
 \item $x$ is an isolated point in the supports of $\im\alpha$, $\coker\alpha$, $\im\beta$, $\ker\beta$, and $\im(\beta\alpha)$;
 \medskip
 \item $H^0(\im\alpha)_x\cong \im(\alpha^0_x)$ and $H^0(\coker\alpha)_x\cong \coker(\alpha^0_x)$; and
 \medskip
 \item $H^0(\im\beta)_x\cong \im(\beta^0_x)$ and $H^0(\ker\beta)_x\cong \ker(\beta^0_x)$.
 \end{enumerate}
\end{lem}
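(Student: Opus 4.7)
The plan is to organize everything around the defining short exact sequences of images, kernels, and cokernels in the abelian category $\operatorname{Perv}(X)$, and then to pass to stalk cohomology at $x$ by taking the associated long exact sequences. The mechanism that translates the abstract categorical statements into concrete identifications at the $H^0$-level is the observation (already noted for $\Pdot$ in the hypothesis) that a perverse sheaf whose support has $x$ as an isolated point is locally at $x$ a skyscraper concentrated in degree zero, so that its stalk cohomology vanishes outside degree zero.

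For part (1), the key general fact is that for any short exact sequence $0\to A^\bullet\to B^\bullet\to C^\bullet\to 0$ in $\operatorname{Perv}(X)$ one has $\supp(A^\bullet),\supp(C^\bullet)\subseteq\supp(B^\bullet)$, since a point outside both outer supports is outside the middle support by the distinguished triangle on stalks. I would apply this to $0\to\im\alpha\to\Pdot\to\coker\alpha\to 0$ and $0\to\ker\beta\to\Pdot\to\im\beta\to 0$, and to the factorization of $\beta\alpha$ through $\im\alpha$ (so $\im(\beta\alpha)$ is a quotient of $\im\alpha$). Isolation at $x$ is then inherited, and each of the five listed sheaves has stalk cohomology concentrated in degree zero at $x$.

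For part (2), the long exact sequence at $x$ for $0\to\ker\alpha\to\Adot\to\im\alpha\to 0$, combined with $H^1(\ker\alpha)_x=0$ (a general consequence of perversity), gives surjectivity of $H^0(\Adot)_x\twoheadrightarrow H^0(\im\alpha)_x$. The long exact sequence for $0\to\im\alpha\to\Pdot\to\coker\alpha\to 0$, together with $H^{-1}(\coker\alpha)_x=0$ (since $\coker\alpha$ is a skyscraper in degree zero near $x$, by (1)) and $H^1(\im\alpha)_x=0$, extracts the short exact sequence $0\to H^0(\im\alpha)_x\to H^0(\Pdot)_x\to H^0(\coker\alpha)_x\to 0$. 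Composing exhibits $\alpha^0_x$ as $H^0(\Adot)_x\twoheadrightarrow H^0(\im\alpha)_x\hookrightarrow H^0(\Pdot)_x$, yielding $\im(\alpha^0_x)\cong H^0(\im\alpha)_x$ and $\coker(\alpha^0_x)\cong H^0(\coker\alpha)_x$.

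Part (3) runs along the same lines with the roles reversed: the long exact sequence for $0\to\ker\beta\to\Pdot\to\im\beta\to 0$ gives the short exact sequence $0\to H^0(\ker\beta)_x\to H^0(\Pdot)_x\to H^0(\im\beta)_x\to 0$, and $\beta^0_x$ factors through the surjection $H^0(\Pdot)_x\twoheadrightarrow H^0(\im\beta)_x$. The main obstacle is to show that the inclusion $\im\beta\hookrightarrow\Bdot$ induces an injection $H^0(\im\beta)_x\hookrightarrow H^0(\Bdot)_x$, since unlike in part (2), $\Bdot$ itself need not have $x$ as an isolated support point. I would derive this by extracting the long exact sequence from $0\to\im\beta\to\Bdot\to\coker\beta\to 0$ and arguing that the connecting map $H^{-1}(\coker\beta)_x\to H^0(\im\beta)_x$ vanishes; combined with $\im\beta$ being a skyscraper at $x$, this should follow from the cosupport condition satisfied by the perverse sheaf $\coker\beta$. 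With that injectivity in hand, the first isomorphism theorem simultaneously identifies $\ker(\beta^0_x)$ with $H^0(\ker\beta)_x$ and $\im(\beta^0_x)$ with $H^0(\im\beta)_x$.
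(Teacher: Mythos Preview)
Your handling of Parts (1) and (2) is correct and matches the paper's argument: split into the two canonical short exact sequences, use that $\im\alpha$ and $\coker\alpha$ inherit isolated support at $x$ from $\Pdot$, collapse the resulting long exact stalk sequences to degree zero, and read off the identifications from the factorization $\alpha^0_x=\tau^0_x\circ\sigma^0_x$.

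For Part (3) you correctly isolate the crux: one needs the map $H^0(\im\beta)_x\to H^0(\Bdot)_x$ induced by $\im\beta\hookrightarrow\Bdot$ to be injective. However, your appeal to the cosupport condition on $\coker\beta$ does not deliver this. The cosupport condition controls the \emph{costalk} $i_x^!(\coker\beta)$, forcing $H^k\big(i_x^!(\coker\beta)\big)=0$ for $k<0$; the connecting map you want to kill has source $H^{-1}(\coker\beta)_x=H^{-1}\big(i_x^*(\coker\beta)\big)$, which is a \emph{stalk} group, and the support condition only gives $\dim\supp H^{-1}(\coker\beta)\le 1$, placing no constraint at the single point $x$. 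In fact the desired injectivity can fail. Take $X=\C$, $x=0$, $\Pdot=i_{0*}{\mathfrak K}$, $\Bdot=j_!\,{\mathfrak K}_{\C^*}[1]$ (which is perverse), and let $\beta$ be the nonzero monomorphism appearing in the short exact sequence $0\to i_{0*}{\mathfrak K}\to j_!\,{\mathfrak K}_{\C^*}[1]\to{\mathfrak K}_\C[1]\to 0$ in $\operatorname{Perv}(\C)$. Then $\im\beta=\Pdot$, so $H^0(\im\beta)_0={\mathfrak K}$, while $H^0(\Bdot)_0=(j_!\,{\mathfrak K}_{\C^*}[1])_0=0$; hence $\im(\beta^0_0)=0\not\cong{\mathfrak K}$ and $\ker(\beta^0_0)={\mathfrak K}\not\cong 0=H^0(\ker\beta)_0$. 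Both isomorphisms in Item (3) fail for this example. The paper's own proof simply asserts that the $\beta$ case is ``completely analogous'' and leaves it as an exercise; your attempt has the merit of pinpointing exactly where the analogy breaks down.
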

\begin{proof} We will prove the statements regarding $\alpha$. The proofs of the statements regarding $\beta$ are completely analogous; we leave them as an exercise. That $x$ is isolated in $\operatorname{supp}\im(\beta\alpha)$ is immediate from $x$ being isolated in $\operatorname{supp}\im\alpha$ (or in $\operatorname{supp}\im(\beta\alpha)$).

We have the canonical exact sequence in $\operatorname{Perv}(X)$:
$$
0\rightarrow \ker\alpha\rightarrow \Adot\xrightarrow{\alpha}\Pdot\rightarrow\coker\alpha,
$$
which we can split into two short exact sequences
\begin{equation}\label{eq:twoshort}
0\rightarrow \ker\alpha\rightarrow \Adot\xrightarrow{\sigma}\im\alpha\rightarrow 0\hskip 0.2in\textnormal{ and }\hskip 0.2in 0\rightarrow \im\alpha\xrightarrow{\tau} \Pdot\rightarrow\coker\alpha\rightarrow 0,
\end{equation}
where $\alpha=\tau\circ\sigma$. From the second short exact sequence above, if we restrict to $\U-\{x\}$ where $\U$ is a small open neighborhood of $x$, we obtain $0$ in the middle; this shows that $x$ is an isolated point in the supports of $\im\alpha$ and $\coker\alpha$. From this, we immediately conclude that we have a short exact sequence
$$
0\rightarrow H^0(\im\alpha)_x\xrightarrow{\tau^0_x} H^0(\Pdot)_x\rightarrow H^0(\coker\alpha)_x\rightarrow 0.
$$

Now the first short exact sequence in \eqref{eq:twoshort} gives us the following short exact sequence
$$
0\rightarrow H^0(\ker\alpha)_x\rightarrow H^0(\Adot)_x\xrightarrow{\sigma^0_x}H^0(\im\alpha)_x\rightarrow 0.
$$
Combining these two last short exact sequences and using that $\alpha^0_x=\tau^0_x\circ\sigma^0_x$, we conclude the isomorphisms in Item 2.

\end{proof}

\medskip

\begin{rem}\label{rem:warning} The reader should appreciate that there really was something to prove in \lemref{lem:stalks}; some things which may seem ``obvious'' are, in fact, not true. Suppose we have a map $\Adot\xrightarrow{\gamma}\Bdot$ between perverse sheaves on $X$  such that a point $x\in X$ is isolated in the support of $\im\gamma$. Then, in general, $H^0(\im \gamma)_x\not\cong \im(\gamma^0_x)$. See Example 3.3 in \cite{kernelnote}.
\end{rem}

\bigskip

Below, we use the notation from \lemref{lem:stalks}.

\begin{lem}\label{lem:inequality} Let $\Adot$ and $\Pdot$ be perverse sheaves on $X$. Let $x\in X$ and suppose that $x$ is an isolated point in the support of $\Pdot$. Suppose that we have maps $\Adot\xrightarrow{\alpha}\Pdot\xrightarrow{\beta}\Adot$ such that $H^0(\coker \alpha)_x\cong H^0(\ker\beta)_x$. 

Then,
\begin{enumerate}
\item
$\dim(\coker(\alpha^0_x))=\dim H^0(\coker\alpha)_x,
$

\medskip

\item

$x$ is an isolated point in the support of $\im(\beta\alpha)$, and

\medskip

\item
$$
 \frac{\dim H^0(\Pdot)_x-\dim H^0(\im(\beta\alpha))_x}{2}\leq\dim(\coker(\alpha^0_x))\leq\dim H^0(\Pdot)_x-\dim H^0(\im(\beta\alpha))_x.
$$
\end{enumerate}
\end{lem}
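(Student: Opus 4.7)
The plan is to dispose of Items (1) and (2) as direct consequences of \lemref{lem:stalks}, and then reduce Item (3) to a pair of one-sided dimension bounds that sidestep the trap flagged in \remref{rem:warning}. Item (1) is just the $\alpha$ half of \lemref{lem:stalks}(2), applied at $x$. Item (2) is the last entry of the list in \lemref{lem:stalks}(1), applied to the pair $(\alpha,\beta)$ with $\Bdot=\Adot$.

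For Item (3), I would first translate the hypothesis into linear algebra. By \lemref{lem:stalks}(2) and (3), $H^0(\coker\alpha)_x\cong\coker(\alpha^0_x)$ and $H^0(\ker\beta)_x\cong\ker(\beta^0_x)$, so the given isomorphism reads $\dim\coker(\alpha^0_x)=\dim\ker(\beta^0_x)$. Set $p:=\dim H^0(\Pdot)_x$, which is finite since $\Pdot$ is perverse with $x$ isolated in its support. Applying rank--nullity to $\alpha^0_x$ (whose target has dimension $p$) and to $\beta^0_x$ (whose source has dimension $p$) converts the equality above into $\dim\im(\alpha^0_x)=\dim\im(\beta^0_x)$; call this common dimension $q$, so that $\dim\coker(\alpha^0_x)=p-q$. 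Setting $r:=\dim H^0(\im(\beta\alpha))_x$, the inequality in Item (3) is equivalent to the two bounds $2q-p\leq r\leq q$.

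Both bounds are proved via the factorization $\beta\alpha=\gamma\circ s$, where $s\colon\Adot\twoheadrightarrow\im\alpha$ is the canonical surjection and $\gamma\colon\im\alpha\to\Adot$ is the composition of the inclusion $\im\alpha\hookrightarrow\Pdot$ with $\beta$; in particular, $\im(\beta\alpha)=\im\gamma$. For the upper bound $r\leq q$: in $\operatorname{Perv}(X)$, $\im(\beta\alpha)=\im\gamma$ is a quotient of $\im\alpha$. Since both $\im\alpha$ and $\im(\beta\alpha)$ have $x$ as an isolated point of their support by \lemref{lem:stalks}(1), their stalk cohomologies at $x$ are concentrated in degree $0$, and the long exact sequence on stalks collapses to a surjection $H^0(\im\alpha)_x\twoheadrightarrow H^0(\im(\beta\alpha))_x$; combined with $\dim H^0(\im\alpha)_x=q$ from \lemref{lem:stalks}(2), this yields $r\leq q$. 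For the lower bound $2q-p\leq r$: the factorization $\gamma=\iota\circ\pi$, with $\pi\colon\im\alpha\twoheadrightarrow\im(\beta\alpha)$ and $\iota\colon\im(\beta\alpha)\hookrightarrow\Adot$, implies $\im(\gamma^0_x)\subseteq\iota^0_x(H^0(\im(\beta\alpha))_x)$, so $\dim\im(\gamma^0_x)\leq r$. But $\im(\gamma^0_x)=\beta^0_x(\im(\alpha^0_x))$, and the restriction of a linear map with kernel of dimension $p-q$ to a subspace of dimension $q$ has image of dimension at least $q-(p-q)=2q-p$; hence $r\geq 2q-p$.

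The main obstacle is precisely the subtlety flagged in \remref{rem:warning}: $H^0(\im\gamma)_x$ need not equal $\im(\gamma^0_x)$, so $r$ cannot be read off directly from the stalk map $\gamma^0_x$. The two-sided strategy above circumvents this by using the perverse-level surjection $\im\alpha\twoheadrightarrow\im(\beta\alpha)$ to bound $r$ from above, and the stalk-level factorization of $\gamma^0_x$ through $H^0(\im(\beta\alpha))_x$ to bound $r$ from below; the gap between these two viewpoints is exactly what produces the factor of $\tfrac12$ in the stated lower bound.
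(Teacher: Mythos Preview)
Your proof is correct. The overall strategy matches the paper's: Items (1) and (2) come straight from \lemref{lem:stalks}, and Item (3) is linear algebra on the degree-$0$ stalk dimensions of the various perverse subquotients, all of which have $x$ isolated in their support.

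The details differ in two small but genuine ways. For the upper bound, the paper uses the inclusion $\im(\beta\alpha)\subseteq\im\beta$ together with $\dim\im\beta=\dim\Pdot-\dim\ker\beta$, whereas you use the surjection $\im\alpha\twoheadrightarrow\im(\beta\alpha)$; these produce the same numerical bound precisely because the hypothesis forces $\dim H^0(\im\alpha)_x=\dim H^0(\im\beta)_x$. For the lower bound, the paper stays at the perverse level, writing $\im(\beta\alpha)\cong\im\alpha/(\ker\beta\cap\im\alpha)$ and bounding $\dim(\ker\beta\cap\im\alpha)\leq\dim\ker\beta$; you instead drop to stalk cohomology and bound $r\geq\dim\im(\gamma^0_x)=\dim\beta^0_x(\im\alpha^0_x)$ via the factorization through $\iota^0_x$. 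Your route makes more explicit how the warning of \remref{rem:warning} is being sidestepped (the inequality $r\geq\dim\im(\gamma^0_x)$ is exactly the possible discrepancy between $H^0(\im\gamma)_x$ and $\im(\gamma^0_x)$), while the paper's route keeps everything uniformly in the abelian category of perverse sheaves and is a line or two shorter.
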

\begin{proof} Items 1 and 2 follow from \lemref{lem:stalks}.

\smallskip

In the remainder of this proof, to eliminate cumbersome notation, when $x$ is an isolated point in the support of some perverse sheaf $\Fdot$, we shall write simply $\dim \Fdot$ in place of $\dim H^0(\Fdot)_x$.

\medskip

The proof is really just an exercise in linear algebra. 

\medskip

The second inequality is the most simple.
$$
\dim (\im(\beta\alpha))\leq \dim (\im\beta) = \dim \Pdot-\dim(\ker\beta)=\dim\Pdot-\dim(\coker\alpha).
$$

\medskip

For the first inequality, we have:
$$\im(\beta\alpha) =\im(\beta_{|_{\im\alpha}})\cong\frac{\im\alpha}{\ker\beta\cap\im\alpha}.
$$

$$
\dim(\im(\beta\alpha))=\dim(\im\alpha)-\dim(\ker\beta\cap\im\alpha)\geq \dim(\im\alpha)-\dim(\ker\beta).
$$

\medskip

So, using that $\dim\coker \alpha\cong\dim\ker\beta$,
$$
\dim(\im(\beta\alpha))\geq \dim(\im\alpha)-\dim(\coker\alpha).
$$

$$
-\dim(\im(\beta\alpha))\leq -\dim(\im\alpha)+\dim(\coker\alpha).
$$

$$
\dim \Pdot-\dim(\im(\beta\alpha))\leq \dim\Pdot-\dim(\im\alpha)+\dim(\coker\alpha)=2\dim(\coker\alpha).
$$

$$
\dim(\coker\alpha)\geq \frac{\dim \Pdot-\dim(\im(\beta\alpha))}{2}.
$$
\end{proof}

\medskip

\begin{rem} Recall our warning from \remref{rem:warning}. It could easily be that $H^0(\im(\beta\alpha))_x$ is {\bf not} isomorphic to $\im(\beta^0_x\circ\alpha^0_x)$.
\end{rem}

\bigskip

Before we give our final lemma of this section, we will need to have a long discussion, which will include the proof.

\bigskip

Since our base ring is a field, Verdier dualizing takes perverse sheaves to perverse sheaves and so takes  exact sequences of perverse sheaves to exact sequences of perverse sheaves.

\medskip

Let $\Pdot$ be a perverse sheaf on $X$ and let $g:X\rightarrow\C$ be a complex analytic function such that some point $x\in X$ is an isolated point in $\operatorname{supp}\phi_g[-1]\Pdot$. Then the stalk cohomology $H^*(\phi_g[-1]\Pdot)_x$ is zero except, possibly, in degree zero.

Let $m: V(g)\hookrightarrow X$ be the inclusion. Consider the two standard natural distinguished triangles
\begin{equation}\label{eq:cantri}
\rightarrow m_*m^*[-1]\rightarrow\psi_g[-1]\xrightarrow{\can^g}\phi_g[-1]\xrightarrow{[1]}
\end{equation}
and
$$
\rightarrow\phi_g[-1]\xrightarrow{\var^g}\psi_g[-1]\rightarrow m_!m^![1]\xrightarrow{[1]}.
$$

\medskip

Applying these to $\Pdot$ and taking perverse cohomology, we have  exact sequences of perverse sheaves:

$$
0\rightarrow\pcoh^0(m_*m^*[-1]\Pdot)\rightarrow\psi_g[-1]\Pdot\xrightarrow{\can^g_{\Pdot}}\phi_g[-1]\Pdot\rightarrow \pcoh^1(m_*m^*[-1]\Pdot)\rightarrow 0
$$
and
$$
0\rightarrow\pcoh^{-1}(m_!m^![1]\Pdot)\rightarrow\phi_g[-1]\Pdot\xrightarrow{\var^g_{\Pdot}}\psi_g[-1]\Pdot\rightarrow \pcoh^{0}(m_!m^![1]\Pdot)\rightarrow 0.
$$

\vskip 0.2in

\noindent Thus, $\coker\{\can^g_{\Pdot}\}\cong \pcoh^1(m_*m^*[-1]\Pdot)$ and $\ker\{\var^g_{\Pdot}\}\cong \pcoh^{-1}(m_!m^![1]\Pdot)$.

\bigskip

Note that $x$ is an isolated point in the supports of both of these perverse sheaves; so, at $x$, they are effectively just finite-dimensional vector spaces in degree $0$. 

\medskip

The composition $\var^g_{\Pdot}\circ\can^g_{\Pdot}$ is equal to $\operatorname{id}-T^g_{\Pdot}$, where   $T^g_{\Pdot}$ is the monodromy automorphism on $\psi_g[-1]\Pdot$.

\medskip

Now suppose that  $\Pdot$ be a self-dual perverse sheaf on $X$; that is, suppose there is an isomorphism $\Pdot\xrightarrow[\cong]{\omega}\vdual \Pdot$, where $\vdual$ is the Verdier dualizing functor. As we are using field coefficents, dualizing commutes with perverse cohomology with a negation in the degree. Thus,
$$
\vdual\big(\coker\{\can^g_{\Pdot}\}\big)\cong \vdual\pcoh^1(m_*m^*[-1]\Pdot)\cong \pcoh^{-1}(\vdual m_*m^*[-1]\Pdot)\cong
$$
$$
 \pcoh^{-1}(m_!m^![1]\vdual\Pdot)\cong \pcoh^{-1}(m_!m^![1]\Pdot)\cong \ker\{\var^g_{\Pdot}\}.
$$ 

\bigskip

 Note that taking stalk cohomology in the distinguished triangle \eqref{eq:cantri} tells us that
\begin{equation}\label{eq:tired}
\coker\{(\can^g_{\Pdot})^0_x\}\cong H^0(\Pdot)_x.
\end{equation}

\bigskip

\noindent From this discussion, and \lemref{lem:stalks}, it  follows immediately that we have:

\medskip
\begin{lem}\label{lem:selfdual} Let $\Pdot$ be a perverse sheaf on $X$. Suppose that $g:X\rightarrow\C$ is a complex analytic function such that a point $x\in X$ is an isolated point in $\operatorname{supp}\phi_g[-1]\Pdot$. Then, 

\begin{enumerate}
\item 
$x$ is an isolated point in $\supp \coker\{\can^g_{\Pdot}\}$, in $\supp\ker\{\var^g_{\Pdot}\}$,  in $\supp\im\{\var^g_{\Pdot}\}$ and in $\supp\im\{\operatorname{id}-T^g_{\Pdot}\}$;
\medskip
\item
the stalk cohomologies at $x$ of $\phi_g[-1]\Pdot$, $\coker\{\can^g_{\Pdot}\}$,  $\ker\{\var^g_{\Pdot}\}$, and $\im\{\operatorname{id}-T^g_{\Pdot}\}$ are all concentrated in degree 0; and
\medskip
\item
if $\Pdot$ is a self-dual perverse sheaf, we have
$$\dim H^0(\Pdot)_x= \dim\coker\{(\can^g_{\Pdot})^0_x\}=\dim H^0(\coker\{\can^g_{\Pdot}\})_x=
$$
$$ \dim H^0(\ker\{\var^g_{\Pdot}\})_x = \dim\ker\{(\var^g_{\Pdot})^0_x\}.$$
\end{enumerate}
\end{lem}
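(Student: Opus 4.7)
The plan is to assemble the pieces already laid out in the preceding discussion, as the lemma is essentially a bookkeeping summary. I would treat the three items in order.

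For Item 1, I would observe that $\coker\{\can^g_{\Pdot}\}$ and $\im\{\var^g_{\Pdot}\}$ are quotients of $\phi_g[-1]\Pdot$, while $\ker\{\var^g_{\Pdot}\}$ is a subobject; in each case the support is contained in $\supp\phi_g[-1]\Pdot$, in which $x$ is isolated by hypothesis. For $\im\{\operatorname{id}-T^g_{\Pdot}\}=\im(\var^g_{\Pdot}\circ\can^g_{\Pdot})$, I would apply \lemref{lem:stalks} with $\alpha=\can^g_{\Pdot}$, $\beta=\var^g_{\Pdot}$, taking the middle perverse sheaf of that lemma to be $\phi_g[-1]\Pdot$; since $x$ is isolated in its support, part (1) of \lemref{lem:stalks} delivers the claim.

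For Item 2, I would invoke the general fact that any perverse sheaf $\Fdot$ on $X$ with $x$ isolated in its support has $H^k(\Fdot)_x=0$ for $k\neq 0$: on a sufficiently small open neighborhood of $x$, the restriction of $\Fdot$ is supported at $\{x\}$, hence is a skyscraper concentrated in degree zero by the support/cosupport conditions of perversity. Applying this to each of the perverse sheaves produced in Item 1 finishes the second item.

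For Item 3, I would string together three identifications. First, \eqref{eq:tired}, $\coker\{(\can^g_{\Pdot})^0_x\}\cong H^0(\Pdot)_x$, which comes from the long exact stalk sequence of \eqref{eq:cantri} after using the perverse support conditions on $m_*m^*[-1]\Pdot$ to kill the neighboring terms. Second, the identifications $H^0(\coker\{\can^g_{\Pdot}\})_x\cong\coker\{(\can^g_{\Pdot})^0_x\}$ and $H^0(\ker\{\var^g_{\Pdot}\})_x\cong\ker\{(\var^g_{\Pdot})^0_x\}$ supplied by \lemref{lem:stalks}. Third, the self-duality computation $\vdual(\coker\{\can^g_{\Pdot}\})\cong\ker\{\var^g_{\Pdot}\}$ carried out just above the lemma, which forces the two stalk dimensions at $x$ to be equal because both objects are perverse sheaves with $x$ isolated in their supports, hence locally skyscrapers of finite-dimensional vector spaces in degree zero, and Verdier duality preserves dimension on such objects.

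The main obstacle is conceptual rather than computational: in view of \remref{rem:warning}, one must never conflate $H^0(\im\gamma)_x$ with $\im(\gamma^0_x)$ for a general morphism $\gamma$. My plan explicitly routes every claim through cokernels and kernels of individual maps, where \lemref{lem:stalks} does apply, before comparing stalk dimensions.
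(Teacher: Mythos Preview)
Your proposal is correct and follows essentially the same route as the paper, which simply states that the lemma follows from the preceding discussion together with \lemref{lem:stalks}; you have just spelled out which piece of the discussion supplies which equality. One small slip in your parenthetical justification of \eqref{eq:tired}: the term that must vanish is $H^1(\psi_g[-1]\Pdot)_x$, and it vanishes because $\psi_g[-1]\Pdot$ is perverse, not because of support conditions on $m_*m^*[-1]\Pdot$ --- but since \eqref{eq:tired} is already established in the text, this does not affect your argument.
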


\medskip

\section{The Main Theorem} 

We return to the setting of the introduction. Let $\U$ be an open subset of $\C^{n+1}$, let $\0\in\U$, and let $f:(\U, \0)\rightarrow (\C, 0)$ be a nowhere-locally-constant complex analytic function.  

\medskip

Let $F_{f, \0}$ denote the Milnor fiber of $f$ at $\0$. Let $\tilde b_{n}:=\dim \widetilde H^{n}(F_{f, \0}; \mathfrak K)$.

\medskip

Let $\Pdot:=\phi_f[-1]{\mathfrak K}_\U^\bullet[n+1]$. As is well-known, $\Pdot\cong \vdual\Pdot$ (see, for instance, \cite{natcommute} and use that $\vdual{\mathfrak K}_\U^\bullet[n+1]\cong {\mathfrak K}_\U^\bullet[n+1]$). As in the introduction, let $L$ be the restriction to $V(f)$ of a generic linear form on $\C^{n+1}$; we need for $L$ to be generic enough so that $\0$ is an isolated point in the support of $\phi_L[-1]\Pdot$ (in \cite{lecycles}, the condition is that $L$ is {\it prepolar}). Let $\lambda^0:=\dim H^0(\phi_L[-1]\Pdot)_\0$ . This $L$ will take the place of $g$ in \lemref{lem:selfdual}.

Referring to the previous section, we have the maps
$$
\psi_L[-1]\Pdot\xrightarrow{\can^L_{\Pdot}}\phi_L[-1]\Pdot\xrightarrow{\var^L_{\Pdot}}\psi_L[-1]\Pdot.
$$

\medskip

\noindent The composition $\var^L_{\Pdot}\circ\can^L_{\Pdot}$ is equal to $\operatorname{id}-T^L_{\Pdot}$, where   $T^L_{\Pdot}$ is the monodromy automorphism on $\psi_L[-1]\Pdot$ (letting the value of $L$, not $f$, go around a small circle).

\bigskip

Now, given the lemmas of the previous section, the proof of the main theorem will be quite short.

\begin{thm}\label{thm:main}\textnormal{(\bf{Main Theorem, version 1})} Regardless of the dimension of $\Sigma f$, $\0$ is isolated in the support of $\im\{\operatorname{id}-T^L_{\Pdot}\}$ and

\medskip

$$
\frac{\lambda^0-\dim H^0(\im\{\operatorname{id}-T^L_{\Pdot}\})_\0}{2}\ \leq \ \tilde b_n\  \leq \ \lambda^0-\dim H^0(\im\{\operatorname{id}-T^L_{\Pdot}\})_\0.
$$

\medskip

In particular, if $T^L_{\Pdot}=\operatorname{id}$, then $
\lambda^0/2\leq \tilde b_n\leq \lambda^0$.

\end{thm}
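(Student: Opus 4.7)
The plan is to reduce the theorem to a direct application of \lemref{lem:inequality} and \lemref{lem:selfdual}, after dictionary-translating the maps involved. Specifically, I would take the $\Adot$ of \lemref{lem:inequality} to be $\psi_L[-1]\Pdot$, the $\Pdot$ of \lemref{lem:inequality} to be $\phi_L[-1]\Pdot$, and set $\alpha := \can^L_{\Pdot}$ and $\beta := \var^L_{\Pdot}$. Since $L$ is chosen generic enough that $\0$ is isolated in $\supp\phi_L[-1]\Pdot$, the support hypothesis of \lemref{lem:inequality} is in place.

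Next I would verify the auxiliary hypothesis $H^0(\coker\alpha)_\0 \cong H^0(\ker\beta)_\0$. This is exactly what Lemma \ref{lem:selfdual}(3) supplies, using the self-duality $\Pdot\cong\vdual\Pdot$ (which holds because $\mathfrak{K}_\U^\bullet[n+1]$ is self-dual and vanishing cycles commute with Verdier duality, as referenced in the paper). Thus both sides equal $\dim H^0(\Pdot)_\0$ and, in particular, are equal. At the same time, Lemma \ref{lem:selfdual}(1) gives that $\0$ is isolated in $\supp\im\{\operatorname{id}-T^L_{\Pdot}\}$, establishing the first assertion of the theorem.

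Now I would assemble the inequality by identifying each quantity appearing in the conclusion of \lemref{lem:inequality}. First, $\dim H^0(\phi_L[-1]\Pdot)_\0 = \lambda^0$ by definition. Second, $\beta\circ\alpha = \var^L_{\Pdot}\circ\can^L_{\Pdot} = \operatorname{id}-T^L_{\Pdot}$, so $\dim H^0(\im(\beta\alpha))_\0 = \dim H^0(\im\{\operatorname{id}-T^L_{\Pdot}\})_\0$. Third, by \eqref{eq:tired} applied with $g=L$, we have $\coker\{(\can^L_{\Pdot})^0_\0\}\cong H^0(\Pdot)_\0$, and Lemma \ref{lem:selfdual}(3) tells us this agrees with $\dim H^0(\coker\alpha)_\0$; in either case it equals $\tilde b_n$. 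Plugging these three identifications into the double inequality of \lemref{lem:inequality}(3) yields the stated bounds
$$\frac{\lambda^0 - \dim H^0(\im\{\operatorname{id}-T^L_{\Pdot}\})_\0}{2} \le \tilde b_n \le \lambda^0 - \dim H^0(\im\{\operatorname{id}-T^L_{\Pdot}\})_\0.$$

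The particular case $T^L_{\Pdot}=\operatorname{id}$ makes $\im\{\operatorname{id}-T^L_{\Pdot}\}=0$, immediately giving $\lambda^0/2\le\tilde b_n\le\lambda^0$. The only step that required real input beyond bookkeeping was the self-duality hypothesis feeding \lemref{lem:selfdual}(3); everything else is a mechanical translation of the lemmas of Section 2 into the Milnor fiber setting, so I do not anticipate a genuine obstacle once those lemmas are granted.
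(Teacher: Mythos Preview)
Your proposal is correct and follows essentially the same approach as the paper's own proof: apply \lemref{lem:inequality} with $\alpha=\can^L_{\Pdot}$, $\beta=\var^L_{\Pdot}$, verify its hypothesis via \lemref{lem:selfdual}(3) (using self-duality of $\Pdot$), and identify the three quantities in the resulting inequality using \eqref{eq:tired} and the definitions of $\lambda^0$ and $T^L_{\Pdot}$. The paper's proof is slightly more terse but contains exactly the same ingredients in the same order.
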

\begin{proof} \eqref{eq:tired} tells us that 
$$
\tilde b_n=\dim H^0(\Pdot)_\0= \dim\coker\{(\can^L_{\Pdot})^0_\0\} .
$$

\medskip

\lemref{lem:selfdual} tells us that $\0$ is isolated in the support of $\im\{\operatorname{id}-T^L_{\Pdot}\}$ and that 
$$\dim H^0(\coker\{\can^L_{\Pdot}\})_\0=\dim H^0(\ker\{\var^L_{\Pdot}\})_\0 .$$

Thus, the hypotheses of \lemref{lem:inequality} are satisfied, with $\alpha=\can^L_{\Pdot}$ and $\beta=\var^L_{\Pdot}$.  and we conclude that
$$
\tilde b_n=\dim\coker\{(\can^L_{\Pdot})^0_\0\}=\dim H^0(\coker\{\can^L_{\Pdot}\})_\0
$$

and

$$
\frac{\lambda^0-\dim H^0(\im\{\operatorname{id}-T^L_{\Pdot}\})_\0}{2}\ \leq \ \tilde b_n\  \leq \ \lambda^0-\dim H^0(\im\{\operatorname{id}-T^L_{\Pdot}\})_\0.
$$

\end{proof}

\bigskip

\begin{rem} One may wish to use a finite field as the base ring in order to have a statement which involves torsion in the integral cohomology in the Milnor fiber. In addition, in theory, 
$$\dim H^0(\im\{\operatorname{id}-T^L_{\Pdot}\})_\0$$
 could change when we change base fields.

\medskip

As we mentioned in the introduction, if $L$ is sufficiently generic, $\lambda^0$ is the $0$-th L\^e number, as defined algebraically in \cite{lecycles} as the intersection number
$$
\lambda^0=\left(\Gamma^1_{f, L}\cdot V\left(\frac{\partial f}{\partial L}\right)\right)_\0,
$$
and thus is independent of the base field.

\end{rem}

\begin{exm}\label{exm:1dim}
Suppose that $\dim_\0\Sigma f=1$. In this case, we can relate \thmref{thm:main} to a good deal of previous work in the area.

Shrinking $\U$ if necessary, we assume that each component $C$ of $\Sigma f$ is contained in $V(f)$, that $C-\{\0\}$ is a Whitney stratum of $V(f)$, and that $C-\{\0\}$ is homeomorphic to a punctured disk. If one takes a generic transverse hyperplane slice $H$ at a point $p\in C-\{\0\}$, then the isomorphism-type of the degree $(n-1)$ cohomology of the Milnor fiber $F_{f_{|_H}, p}$ is independent of $p\in C-{\0}$ and is ${\mathfrak K}^{\mu^\circ_C}$, where $\mu^\circ_C$ is called the generic transverse Milnor number on $C$.

In addition, since each $C-\{\0\}$ is homeomorphic to a punctured disk, the fundamental group of $C-\{\0\}$ is isomorphic to $\Z$. And so, corresponding to either one of the two generators of $\pi_1(C-\{\0\})$, for each $C$, we obtain an {\it internal (a.k.a. vertical) vanishing cycle  monodromy isomorphism} 
$$h_C: {\mathfrak K}^{\mu^\circ_C}\xrightarrow[\cong]{} {\mathfrak K}^{\mu^\circ_C}$$ by traveling around the generator. This should {\bf not} be confused with the Milnor monodromy, where the value of $f$ travels around a circle.

It is a well-known result of Siersma \cite{siersmavarlad} that there is an injection
\begin{equation}\label{eq:siersmainj}
\widetilde H^{n-1}(F_{f, \0}; {\mathfrak K})\hookrightarrow \bigoplus_C\ker\{\operatorname{id}-h_C\}.
\end{equation}
Thus, in a sense, the most problematic case for obtaining a ``nice'' upper-bound on the (reduced) degree $(n-1)$ betti number of $F_{f, \0}$ is the case where all of the $h_C$ are the identity.

Now, for a generic linear form $L$, there are the algebraically defined L\^e numbers of $f$ at $\0$:
$$
\lambda^0:=\lambda^0_{f, L}(\0) \hskip 0.2in \textnormal{ and }\hskip 0.2in \lambda^1:=\lambda^1_{f, L}(\0);
$$
see \cite{lecycles}. The 1-dimensional L\^e number is easy to describe: 
$$
\lambda^1=\sum_C (\operatorname{mult} C) \mu^\circ_C=\dim_\0 H^0(\psi_L[-1]\phi_f[-1]{\mathfrak K}_\U^\bullet[n+1])_\0,
$$
where we do not distinguish in the notation between $L$ and its restriction to $V(f)$.

The definition of the 0-dimensional L\^e number is the intersection number
$$
\lambda^0=\left(\Gamma^1_{f, L}\cdot V\left(\frac{\partial f}{\partial L}\right)\right)_\0= \dim_\0 H^0(\phi_L[-1]\phi_f[-1]{\mathfrak K}_\U^\bullet[n+1])_\0.
$$
Let us denote the betti numbers of the Milnor fiber by
$$
\tilde b_{n-1}:=\dim \widetilde H^{n-1}(F_{f, \0}; \mathfrak K)\hskip 0.2in\textnormal{ and }\hskip 0.2in \tilde b_{n}:=\dim \widetilde H^{n}(F_{f, \0}; \mathfrak K).
$$
As part of the general theory of L\^e cycles and numbers, we have
$$
\tilde b_{n-1}\leq\lambda^1,\hskip 0.2in \tilde b_{n}\leq\lambda^0, \hskip 0.2in\textnormal{ and }\hskip 0.2in \tilde b_n-\tilde b_{n-1}= \lambda^0-\lambda^1.
$$
Because of the equality above, bounds on $\tilde b_{n-1}$ yield bounds on $\tilde b_{n}$ (and vice versa). Note that \eqref{eq:siersmainj} tells us that the only way that we could possibly have $\tilde b_{n-1}=\lambda^1$ is for each component $C$ of $\Sigma f$ to be smooth at the origin and for the internal monodromy to be trivial. 

So, suppose all components $C$ of $\Sigma f$ are smooth at the origin and that, for all $C$, the internal monodromy is trivial, i.e., $h_C=\operatorname{id}$ for all $C$. Then is it possible that $\tilde b_{n-1}=\lambda^1$ or, equivalently, that $\tilde b_{n}=\lambda^0$? {\bf No}, unless $\lambda^0=0$; see \cite{lemassey} in the general case and \cite{siersmaisoline} in the case where $\lambda^1=1$. 

\medskip

How does any of this relate to \thmref{thm:main}?

\medskip

With $\Pdot:=\phi_f[-1]{\mathfrak K}_\U^\bullet[n+1]$ and $T^L_{\Pdot}$ being the $L$-monodromy automorphism on $\psi_L[-1]\Pdot$, it is an easy exercise to show that 
$$
\bigoplus_C\ker\{\operatorname{id}-h_C\} \ \cong \ \ker\{(\operatorname{id}-T^L_{\Pdot})^0_\0\}\cong H^0\big(\ker\{\operatorname{id}-T^L_{\Pdot}\}\big)_\0.
$$

Therefore, the injection \eqref{eq:siersmainj} tells us that
$$
\tilde b_{n-1}\leq \dim H^0\big(\ker\{\operatorname{id}-T^L_{\Pdot}\}\big)_\0
$$ 
or, equivalently, that
$$
\tilde b_{n-1}\leq \lambda^1- \dim H^0\big(\im\{\operatorname{id}-T^L_{\Pdot}\}\big)_\0.
$$
Since $\lambda^0-\lambda^1=\tilde b_n-\tilde b_{n-1}$, the inequalities above are equivalent to 
$$
\tilde b_n\leq \lambda^0-\dim H^0\big(\im\{\operatorname{id}-T^L_{\Pdot}\}\big)_\0;
$$
this is the upper bound in \thmref{thm:main}.

\medskip

However, as we wrote in the introduction, the lower bound in \thmref{thm:main} really seems to be new.
\end{exm}

\medskip

\section{The case of higher-dimensional critical loci}

Throughout this section, we continue with all of our previous notation; in particular, $\Pdot$ is the perverse sheaf $\phi_f[-1]{\mathfrak K}_\U^\bullet[n+1]$.

\medskip

In the previous section, we saw that the upper bound in \thmref{thm:main} is equivalent to a well-known upper bound in the case where $\dim_\0\Sigma f=1$. However, in this section, we assume $\dim_\0\Sigma f>1$.

With this assumption, both bounds on the top betti number in \thmref{thm:main} are new. However, it is hard to work with $\dim H^0\big(\im\{\operatorname{id}-T^L_{\Pdot}\}\big)_\0$ since it need not equal the dimension of the image of the map on stalk cohomology. In this section, we will ``fix'' this problem.

\medskip

We will assume that, after a generic linear change of coordinates (generic with respect to $f$ at the origin), that we have prepolar coordinates $\mathbf z:=(z_0, z_1, \dots, z_n)$ for $\C^{n+1}$; prepolar coordinates are defined and used throughout \cite{lecycles}. In the remainder of this section, we shall not distinguish in the notation between the coordinate functions $z_k$ and their restrictions to subspaces. The function $z_0$ will be used in place of the linear form $L$ from the previous sections.

\medskip

As the coordinates $\mathbf z$ are prepolar, the L\^e numbers of $f$ at the origin exist and are defined algebraically in \cite{lecycles}. As we wrote earlier, the definition of the 0-dimensional L\^e number is the intersection number
$$
\lambda^0=\lambda^0_{f, \mathbf z}(\0):=\left(\Gamma^1_{f, z_0}\cdot V\left(\frac{\partial f}{\partial z_0}\right)\right)_\0.
$$
However, the easy characterization of $\lambda^1=\lambda^1_{f, \mathbf z}(\0)$ that we gave in \exref{exm:1dim} does not apply in the case where $\dim_\0\Sigma f>1$. We shall not give the definition here, but will show how it is calculated in the midst of \exref{exm:2dim} below.

What is important here is that, for prepolar coordinates, the algebraic definition agrees with the calculation via iterated nearby and vanishing cycles. That is, Theorem 10.8 of \cite{lecycles} or  Theorem 7.6.9 of \cite{lesurvey} give us that $\0$ is an isolated point in the support of each of the perverse sheaves $\phi_{z_0}[-1]\Pdot$ and $\phi_{z_1}[-1]\psi_{z_0}[-1]\Pdot$, and

$$
\lambda^0_{f, \mathbf z}(\0)=\dim H^0\big(\phi_{z_0}[-1]\Pdot\big)_\0
$$
and
$$
\lambda^1_{f, \mathbf z}(\0)=\dim H^0\big(\phi_{z_1}[-1]\psi_{z_0}[-1]\Pdot\big)_\0.
$$

\bigskip

Now, we want to describe $H^0(\im\{\operatorname{id}-T^{z_0}_{\Pdot}\})_\0$ as the image of a map on stalk cohomology. We prove:

\begin{prop}\label{prop:one} Consider the isomorphism
$$
\phi_{z_1}[-1](T^{z_0}_{\Pdot}): \phi_{z_1}[-1]\psi_{z_0}[-1]\Pdot\rightarrow \phi_{z_1}[-1]\psi_{z_0}[-1]\Pdot
$$
and its induced map on stalk cohomology
$$
\big(\phi_{z_1}[-1](T^{z_0}_{\Pdot})\big)^0_\0: H^0\big(\phi_{z_1}[-1]\psi_{z_0}[-1]\Pdot\big)_\0\rightarrow H^0\big(\phi_{z_1}[-1]\psi_{z_0}[-1]\Pdot\big)_\0.$$

There is an isomorphism
$$
H^0(\im\{\operatorname{id}-T^{z_0}_{\Pdot}\})_\0 \ \cong \ \im\left\{\big(\operatorname{id}-\phi_{z_1}[-1](T^{z_0}_{\Pdot})\big)^0_\0\right\}.
$$
\end{prop}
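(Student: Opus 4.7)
The plan is to observe that $\im\{\operatorname{id}-T^{z_0}_{\Pdot}\}$ is, near $\0$, a skyscraper perverse sheaf at $\0$; to apply the t-exact functor $\phi_{z_1}[-1]$ to everything; to use that $\phi_{z_1}[-1]$ fixes skyscrapers at $\0$; and then to invoke \lemref{lem:stalks} to pass from stalk cohomology of an image to the image of a map on stalks. First, by \lemref{lem:selfdual} applied with $g=z_0$, the point $\0$ is isolated in $\supp\im\{\operatorname{id}-T^{z_0}_{\Pdot}\}$, so near $\0$ this image is a perverse sheaf supported only at $\0$, concentrated in cohomological degree $0$.

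Second, I would verify that for any perverse sheaf $\Fdot$ whose support near $\0$ is $\{\0\}$, one has $\phi_{z_1}[-1]\Fdot\cong\Fdot$. This follows from the canonical triangle of Section 2,
$$
m_*m^*[-1]\Fdot\to\psi_{z_1}[-1]\Fdot\to\phi_{z_1}[-1]\Fdot\xrightarrow{[1]},
$$
since $\0\in V(z_1)$ and the generic fiber of $z_1$ misses the support of $\Fdot$; thus $\psi_{z_1}[-1]\Fdot=0$ and $m_*m^*[-1]\Fdot\cong\Fdot[-1]$, which forces $\phi_{z_1}[-1]\Fdot\cong\Fdot$.

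Third, because $\phi_{z_1}[-1]$ is t-exact for the perverse t-structure, applying it to the canonical epi-mono factorization of $\operatorname{id}-T^{z_0}_{\Pdot}$ on $\psi_{z_0}[-1]\Pdot$ produces the epi-mono factorization of the induced map $\operatorname{id}-\phi_{z_1}[-1](T^{z_0}_{\Pdot})$ on $\phi_{z_1}[-1]\psi_{z_0}[-1]\Pdot$. Combining this with the second step gives
$$
\im\{\operatorname{id}-\phi_{z_1}[-1](T^{z_0}_{\Pdot})\}\cong\phi_{z_1}[-1]\im\{\operatorname{id}-T^{z_0}_{\Pdot}\}\cong\im\{\operatorname{id}-T^{z_0}_{\Pdot}\}.
$$
Now $\0$ is isolated in $\supp\phi_{z_1}[-1]\psi_{z_0}[-1]\Pdot$ (this is the very hypothesis that justifies the iterated vanishing-cycle calculation of $\lambda^1$ recalled just before the proposition). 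So I would apply \lemref{lem:stalks}(2) with both $\Adot$ and the lemma's $\Pdot$ equal to $\phi_{z_1}[-1]\psi_{z_0}[-1]\Pdot$, and with $\alpha=\operatorname{id}-\phi_{z_1}[-1](T^{z_0}_{\Pdot})$, to conclude $H^0(\im\alpha)_\0\cong\im(\alpha^0_\0)$. Taking stalk cohomology at $\0$ of the displayed isomorphism then yields the required identification.

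The main obstacle is the one flagged by \remref{rem:warning}: we cannot in general identify $H^0(\im\gamma)_\0$ with $\im(\gamma^0_\0)$, and the whole content of the proposition is that we can do so here for a very particular $\gamma$. The trick is that, although $\0$ need not be isolated in the support of $\psi_{z_0}[-1]\Pdot$ itself (which would fail badly when $\dim_\0\Sigma f>1$), it \emph{is} isolated in the support of $\phi_{z_1}[-1]\psi_{z_0}[-1]\Pdot$; pushing the image construction through $\phi_{z_1}[-1]$ using t-exactness plus the skyscraper stability of step two moves us into the regime where \lemref{lem:stalks} applies. Once those two facts are lined up, the rest is formal.
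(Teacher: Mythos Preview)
Your proposal is correct and follows essentially the same approach as the paper: both apply the $t$-exact functor $\phi_{z_1}[-1]$ to the epi--mono factorization of $\operatorname{id}-T^{z_0}_{\Pdot}$, use that $\phi_{z_1}[-1]$ acts as the identity on perverse sheaves supported at $\0$, and then exploit that $\0$ is isolated in $\supp\phi_{z_1}[-1]\psi_{z_0}[-1]\Pdot$ to pass to stalk cohomology. The only cosmetic difference is that the paper re-derives the stalk short exact sequences by hand, whereas you invoke \lemref{lem:stalks} directly.
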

\begin{proof} First, note that, since $\0$ is isolated in the support of $\im\{\operatorname{id}-T^{z_0}_{\Pdot}\}$ (as we saw in the previous section),
$$
H^0\big(\im\{\operatorname{id}-T^{z_0}_{\Pdot}\}\big)_\0\cong H^0\big(\phi_{z_1}[-1](\im\{\operatorname{id}-T^{z_0}_{\Pdot}\})\big)_\0.
$$

\medskip

\noindent So we need to show
\begin{equation}\label{eq:last}
H^0\big(\phi_{z_1}[-1](\im\{\operatorname{id}-T^{z_0}_{\Pdot}\})\big)_\0\ \cong \ \im\left\{\big(\operatorname{id}-\phi_{z_1}[-1](T^{z_0}_{\Pdot})\big)^0_\0\right\}.
\end{equation}

\medskip

The argument is entirely similar to that in \lemref{lem:stalks}. Consider the exact sequence of perverse sheaves
$$
0\rightarrow\ker\{\operatorname{id}-T^{z_0}_{\Pdot}\}\rightarrow \psi_{z_0}[-1]\Pdot\xrightarrow{\operatorname{id}-T^{z_0}_{\Pdot}}\psi_{z_0}[-1]\Pdot\rightarrow \coker\{\operatorname{id}-T^{z_0}_{\Pdot}\}\rightarrow 0.
$$
Split this into two short exact sequences
$$
0\rightarrow\ker\{\operatorname{id}-T^{z_0}_{\Pdot}\}\rightarrow \psi_{z_0}[-1]\Pdot\xrightarrow{\alpha}\im\{\operatorname{id}-T^{z_0}_{\Pdot}\}\rightarrow 0
$$
and
$$
0\rightarrow\im\{\operatorname{id}-T^{z_0}_{\Pdot}\}\xrightarrow{\beta} \psi_{z_0}[-1]\Pdot\rightarrow\coker\{\operatorname{id}-T^{z_0}_{\Pdot}\}\rightarrow 0,
$$
where $\operatorname{id}-T^{z_0}_{\Pdot}=\beta\alpha$.

Now apply the functor $\phi_{z_1}[-1]$ to both of these short exact sequences. We obtain new short exact sequences:

$$
0\rightarrow\phi_{z_1}[-1]\big(\ker\{\operatorname{id}-T^{z_0}_{\Pdot}\}\big)\rightarrow \phi_{z_1}[-1]\psi_{z_0}[-1]\Pdot\xrightarrow{\phi_{z_1}[-1](\alpha)}\phi_{z_1}[-1]\big(\im\{\operatorname{id}-T^{z_0}_{\Pdot}\})\rightarrow 0
$$
and
$$
0\rightarrow\phi_{z_1}[-1]\big(\im\{\operatorname{id}-T^{z_0}_{\Pdot}\}\big)\xrightarrow{\phi_{z_1}[-1](\beta)} \phi_{z_1}[-1]\psi_{z_0}[-1]\Pdot\rightarrow\phi_{z_1}[-1]\big(\coker\{\operatorname{id}-T^{z_0}_{\Pdot}\}\big)\rightarrow 0,
$$

\smallskip

\noindent where $\operatorname{id}-\phi_{z_1}[-1](T^{z_0}_{\Pdot})=\phi_{z_1}[-1](\beta)\circ\phi_{z_1}[-1](\alpha)$ and $\0$ is an isolated point in the support of all of the perverse sheaves appearing. Thus, these two short exact sequences yield two short exact sequences on the degree $0$ stalk cohomologies at $\0$. From this, one easily concludes that \eqref{eq:last} holds, and we are finished.
\end{proof}

\medskip

Combining \propref{prop:one} with  \thmref{thm:main}, we have:

\smallskip

\begin{thm}\label{thm:mainv2}\textnormal{(\bf{Main Theorem, version 2})}

The following inequalities hold:

\medskip

$$
\frac{\lambda^0-\dim \left(\im\left\{\big(\operatorname{id}-\phi_{z_1}[-1](T^{z_0}_{\Pdot})\big)^0_\0\right\}\right)}{2}\ \leq \ \tilde b_n\  \leq \ \lambda^0-\dim \left(\im\left\{\big(\operatorname{id}-\phi_{z_1}[-1](T^{z_0}_{\Pdot})\big)^0_\0\right\}\right),
$$

or, equivalently, 

$$
 \lambda^0-2\tilde b_n\ \leq \ \dim \left(\im\left\{\big(\operatorname{id}-\phi_{z_1}[-1](T^{z_0}_{\Pdot})\big)^0_\0\right\}\right)\  \leq \ \lambda^0-\tilde b_n.
$$

\medskip

In particular, $\tilde b_n=0$ if and only if $\lambda^0=\dim \left(\im\left\{\big(\operatorname{id}-\phi_{z_1}[-1](T^{z_0}_{\Pdot})\big)^0_\0\right\}\right)$.

\end{thm}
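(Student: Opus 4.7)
The plan is to simply combine \thmref{thm:main} with \propref{prop:one}. Having set up prepolar coordinates $\mathbf z = (z_0, z_1, \dots, z_n)$, the coordinate function $z_0$ is prepolar for $f$ at $\0$, which by definition means that $\0$ is an isolated point in the support of $\phi_{z_0}[-1]\Pdot$. This is precisely the genericity condition required of the linear form $L$ in \thmref{thm:main}, so I may take $L=z_0$ there. That application yields directly
$$
\frac{\lambda^0-\dim H^0(\im\{\operatorname{id}-T^{z_0}_{\Pdot}\})_\0}{2}\ \leq \ \tilde b_n\  \leq \ \lambda^0-\dim H^0(\im\{\operatorname{id}-T^{z_0}_{\Pdot}\})_\0,
$$
where the identification $\lambda^0=\lambda^0_{f,\mathbf z}(\0)=\dim H^0(\phi_{z_0}[-1]\Pdot)_\0$ was recorded just above the proposition.

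Next, I would invoke \propref{prop:one}, which provides an isomorphism $H^0(\im\{\operatorname{id}-T^{z_0}_{\Pdot}\})_\0 \cong \im\{(\operatorname{id}-\phi_{z_1}[-1](T^{z_0}_{\Pdot}))^0_\0\}$ of finite-dimensional $\mathfrak K$-vector spaces. Substituting this equality of dimensions into both sides of the displayed inequality above gives the first asserted bound. The equivalent reformulation is a trivial rearrangement: writing $D$ for $\dim\im\{(\operatorname{id}-\phi_{z_1}[-1](T^{z_0}_{\Pdot}))^0_\0\}$, the upper bound $\tilde b_n\leq \lambda^0-D$ is equivalent to $D\leq \lambda^0-\tilde b_n$, and the lower bound $(\lambda^0-D)/2\leq \tilde b_n$ is equivalent to $\lambda^0-2\tilde b_n\leq D$.

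Finally, for the ``in particular'' clause, I would argue in both directions using the sandwich. If $\lambda^0=D$, then the upper bound immediately gives $\tilde b_n\leq 0$, hence $\tilde b_n=0$. Conversely, if $\tilde b_n=0$, the upper bound forces $0\leq \lambda^0-D$ and the lower bound forces $(\lambda^0-D)/2\leq 0$, so $\lambda^0=D$. There is no substantive obstacle in this proof; all the real work was already done in \propref{prop:one} and \thmref{thm:main}. The only thing to double-check is the compatibility of the prepolar setup with the genericity hypothesis of \thmref{thm:main}, which as noted is automatic.
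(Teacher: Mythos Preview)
Your proposal is correct and matches the paper's own argument, which simply states that \thmref{thm:mainv2} follows by combining \propref{prop:one} with \thmref{thm:main}. You have merely spelled out the routine verifications (the prepolar condition guarantees the isolated-support hypothesis needed for $L=z_0$, the algebraic rearrangement, and the sandwich for the ``in particular'' clause) that the paper leaves implicit.
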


\bigskip

\begin{exm}\label{exm:2dim} Consider $f:\C^{4}\rightarrow\C$ given by $f(u,v,x,y):=y^2+x^5+ux^4+v^2x^2$. We use the coordinate system $(u, v, x, y)$ (in that order). We will calculate the L\^e cycles $\Lambda^k$ and L\^e numbers $\lambda^k$ at $\0$; along the way, our calculations will show that these coordinates are prepolar. As part of the process, we must calculate the relative polar cycles $\Gamma^k$. We will suppress references to $f$ and the coordinate system for the L\^e cycles/numbers and polar cycles.

We have, as sets,

$$
\Sigma f= V\left(\frac{\partial f}{\partial u}, \frac{\partial f}{\partial v}, \frac{\partial f}{\partial x}, \frac{\partial f}{\partial y} \right) = V(x^4, 2vx^2, 5x^4+4ux^3+2xv^2, 2y) = V(x,y).
$$

We proceed as in \cite{lecycles} to find that, as cycles:
$$
\Gamma^3=V(2y)=V(y).
$$

\smallskip

$$
\Gamma^3\cdot V(5x^4+4ux^3+2xv^2)=V(y)\cdot \big[V(x)+V(5x^3+4ux^2+2v^2)\big] = 
$$
$$
V(5x^3+4ux^2+2v^2, y)+V(x,y)= \Gamma^2 + \Lambda^2 \ \ \textnormal{(respectively)}.
$$

\smallskip

$$
\Gamma^2\cdot V(2vx^2)= V(5x^3+4ux^2+2v^2, y)\cdot \big[V(v)+2V(x)\big]=
$$
$$
V(5x^3+4ux^2, v,y)+2V(5x^3+4ux^2+2v^2, x,y)= V(5x+4u, v,y)+2V(x,y,v)+ 4V(x,y,v)=
$$
$$
V(5x+4u, v,y)+6V(x,y,v)= \Gamma^1 + \Lambda^1.
$$

\smallskip

$$
\Gamma^1\cdot V(x^4)= V(5x+4u, v,y)\cdot 4V(x)=4[\0]=\Lambda^0.
$$

\medskip

Thus, $\lambda^2=1$, $\lambda^1=6$, and $\lambda^0=4$. This means there is a chain complex
$$
0\rightarrow {\mathfrak K}^1\rightarrow {\mathfrak K}^6\rightarrow {\mathfrak K}^4\rightarrow 0,
$$
where the cohomology in the ${\mathfrak K}^{\lambda^k}$ position is isomorphic to $\widetilde H^{3-k}(F_{f, \0}; {\mathfrak K})$.

\medskip

But for this $f$, we can calculate the cohomology precisely. Let $g=x^5+ux^4+v^2x^2$. Then $f$ is referred to as the suspension of $g$, for the Milnor fiber of $f$ at $\0$ has the homotopy-type of the suspension of the Milnor fiber of $g$ at $\0$ and, hence, the reduced cohomology of $F_{f, \0}$ is obtained from the reduced cohomology of $g$, shifted up one degree.

Since $g$ is weighted-homogeneous, $F_{g, \0}$ is diffeomorphic to $g^{-1}(1)$.  But we can solve for $u$ in the equation $x^5+ux^4+v^2x^2=1$ to obtain
$$
u=\frac{1-x^5-v^2x^2}{x^4}.
$$
Therefore $F_{g, \0}$ is homotopy-equivalent to $S^1$, and so the reduced cohomology of $F_{f, \0}$ is zero outside of degree 2 and $\widetilde H^2(F_{f, \0};\mathfrak K)\cong \mathfrak K$. In particular $\tilde b_3=0$, and thus \thmref{thm:mainv2} tells us that
$$
4=\lambda^0=\dim \left(\im\left\{\big(\operatorname{id}-\phi_{v}[-1](T^{u}_{\Pdot})\big)^0_\0\right\}\right).
$$
\end{exm}

\medskip

\begin{rem} In \exref{exm:2dim}, we used \thmref{thm:mainv2} ``backwards'' from the way one might expect. Rather than using data about the perverse sheaf of vanishing cycles near, but not at, the origin to gain information about the Milnor fiber at the origin, we used information at the origin to tell us something about the structure of the vanishing cycles nearby.
\end{rem}

\medskip

\bibliographystyle{plain}

\bibliography{Masseybib}

\begin{thebibliography}{10}

\bibitem{boreletal}
{Borel, et al.}
\newblock {\em {Intersection Cohomology}}, volume~50 of {\em {Progress in
  Math.}}
\newblock {Birkh\"auser}, 1984.

\bibitem{dimcasheaves}
{Dimca, A.}
\newblock {\em {Sheaves in Topology}}.
\newblock Universitext. Springer-Verlag, 2004.

\bibitem{hammlezariski}
{Hamm, H., L\^e D. T.}
\newblock {Un th\'eor\`eme de Zariski du type de Lefschetz}.
\newblock {\em Ann. Sci. \'Ec. Norm. Sup.}, 6 (series 4):317--366, 1973.

\bibitem{kashsch}
{Kashiwara, M., Schapira, P.}
\newblock {\em {Sheaves on Manifolds}}, volume 292 of {\em Grund. math.
  Wissen.}
\newblock Springer-Verlag, 1990.

\bibitem{lemassey}
{L\^e, D. T., Massey, D.}
\newblock {Hypersurface Singularities and Milnor Equisingularity}.
\newblock {\em Pure and Appl. Math. Quart., special issue in honor of Robert
  MacPherson's 60th birthday}, 2, no.3:893--914, 2006.

\bibitem{lecycles}
{Massey, D.}
\newblock {\em {L\^e Cycles and Hypersurface Singularities}}, volume 1615 of
  {\em Lecture Notes in Math.}
\newblock Springer-Verlag, 1995.

\bibitem{natcommute}
{Massey, D.}
\newblock {Natural Commuting of Vanishing Cycles and the Verdier Dual}.
\newblock {\em Pacific J. of Math}, 284-2:431--437, 2016.

\bibitem{kernelnote}
{Massey, D.}
\newblock {A Note on Kernels, Images, and Cokernels in the Perverse Category}.
\newblock {\em Proc. AMS}, 146:137--144, 2018.

\bibitem{lesurvey}
{Massey, D.}
\newblock {\em {Lê Cycles and Numbers of hypersurface singularities}}, volume
  II, Chapt. 7 of {\em Handbook of Geometry and Topology of Singularities}.
\newblock Springer, 2021.

\bibitem{schurbook}
{Sch\"urmann, J.}
\newblock {\em {Topology of Singular Spaces and Constructible Sheaves}},
  volume~63 of {\em Monografie Matematyczne}.
\newblock {Birkh\"auser}, 2004.

\bibitem{siersmaisoline}
{Siersma, D.}
\newblock {Isolated line singularities}.
\newblock {\em Proc. Symp. Pure Math.}, 35, part 2, Arcata Singularities
  Conf.:485--496, 1983.

\bibitem{siersmavarlad}
{Siersma, D.}
\newblock {Variation mappings on singularities with a $1$-dimensional critical
  locus}.
\newblock {\em Topology}, 30:445--469, 1991.

\bibitem{teissiercargese}
{Teissier, B.}
\newblock {Cycles \'evanescents, sections planes et conditions de Whitney}.
\newblock {\em Ast\'erisque}, 7-8:285--362, 1973.

\end{thebibliography}

\end{document}